\DeclareMathOperator{\Max}{Max}
\DeclareMathOperator{\Min}{Min}
\newtheorem{theorem}{Theorem}
\newtheorem{definition}[theorem]{Definition}
\newtheorem{lemma}[theorem]{Lemma}
\newtheorem{remark}[theorem]{Remark}
\newtheorem{example}[theorem]{Example}
\newtheorem{corollary}[theorem]{Corollary}
\title{Operator residuation in orthomodular posets of finite height}
\author{Ivan~Chajda and Helmut~L\"anger}
\date{}
\begin{document}

\footnotetext{Support of the research of both authors by the Austrian Science Fund (FWF), project I~4579-N, and the Czech Science Foundation (GA\v CR), project 20-09869L, entitled ``The many facets of orthomodularity'', is gratefully acknowledged.}

\maketitle

\begin{abstract}
We show that for every orthomodular poset $\mathbf P=(P,\leq,{}',0,1)$ of finite height there can be defined two operators forming an adjoint pair with respect to an order-like relation defined on the power set of $P$. This enables us to introduce the so-called operator residuated poset corresponding to $\mathbf P$ from which the original orthomodular poset $\mathbf P$ can be recovered. Moreover, this correspondence is almost one-to-one. We show that this construction of operators can be applied also to so-called weakly orthomodular and dually weakly orthomodular posets. Examples of such posets are included.
\end{abstract}

{\bf AMS Subject Classification:} 06C15, 06A11, 03G17, 81P10

{\bf Keywords:} Orthomodular poset, weakly orthomodular poset, dually weakly orthomodular poset, poset of finite height, operator residuated structure, left adjointness

The first attempts for algebraic axiomatization of the logic of quantum mechanics were done by G.~Birkhoff and J.~von Neumann \cite{BV} and K.~Husimi \cite H via orthomodular lattices. Later on the people working in quantum mechanics decided that such an axiomatization is imprecise since for two operators $P$ and $Q$ on the corresponding Hilbert space their disjunction, i.e.\ the lattice join, need not exist in the case when $P$ and $Q$ are neither comparable nor orthogonal. Hence the concept of an orthomodular poset was introduced (see \cite F) and the majority of researchers believe that orthomodular posets form a better algebraic axiomatization of the logic of quantum mechanics, see, e.g., \cite{MP} and \cite{PF}.

It is not so easy to introduce the connective implication in the propositional logic based on an orthomodular poset. Namely, since orthomodular posets are only partial lattices where the lattice operation join need not exist for elements being neither comparable nor orthogonal, also implication is often considered as a partial operation. However, such an approach may cause problems for developing the corresponding logic. We used another approach where implication is everywhere defined, but its result $x\rightarrow y$ for given entries $x$ and $y$ need not be an element, but may be a subset of the orthomodular poset in question, see \cite{CL21}. In \cite{CL21} we showed that for such an implication there exists an operator $\odot$ (considered as conjunction) such that $(\rightarrow,\odot)$ forms a pair connected by some kind of unsharp adjointness. The disadvantage of this approach is that the rule for unsharp adjointness is rather complicated and does not correspond to adjointness as usually considered in residuated structures.

In \cite{CL22} we developed a better approach where the considered orthomodular posets are of finite height. We introduced an everywhere defined connective implication which satisfies the properties usually asked in classical and non-classical logics. Again the result $x\rightarrow y$ for given entries $x$ and $y$ need not be an element, but may be a subset of the orthomodular poset in question. But now this subset contains only the maximal values. Of course, if there is only one maximal value then the result is equal to it and hence the result of implication is an element. In other words, if our orthomodular poset is a lattice then the implication defined there coincides with the implication usually introduced in orthomodular lattices, see, e.g., \cite K and \cite{MP}. On the other hand, if several maximal values of $x\rightarrow y$ exist then this may not cause problems since such an ``unsharp'' reasoning in quantum mechanics was already treated e.g.\ in \cite{GG}. Moreover, this approach enabled us to derive a nice Gentzen system for the axiomatization of this logic, i.e.\ a system consisting of five simple axioms and six derivation rules only, see \cite{CL22} for details.

The above mentioned approach motivates us to study a kind of residuation for such a logic based on orthomodular posets of finite height. The aim of this paper is to find an operator $\odot$ such that the couple $(\rightarrow,\odot)$ forms an adjoint pair.

We start with definitions and basic properties of our concepts.

\begin{definition}\label{def3}
An {\em orthomodular poset} is a bounded poset $(P,\leq,{}',0,1)$ with an antitone involution $'$ which is a complementation satisfying the following conditions:
\begin{enumerate}[{\rm(i)}]
\item If $x,y\in P$ and $x\leq y'$ then $x\vee y$ is defined.
\item If $x,y\in P$ and $x\leq y$ then $y=x\vee(y'\vee x)'$ {\rm(}{\em orthomodularity}{\rm)}.
\end{enumerate}
\end{definition}

If $x,y\in P$ and $x\leq y'$ then $x$ and $y$ are called {\em orthogonal} to each other which will be denoted by $x\perp y$. It is easy to see that the expression at the end of condition (ii) is well-defined. Due to De Morgan's laws, (ii) can equivalently be rewritten in anyone of the following ways:
\begin{enumerate}
\item[(A)] If $x,y\in P$ and $x\leq y$ then $y=x\vee(y\wedge x')$.
\item[(B)] If $x,y\in P$ and $x\leq y$ then $x=y\wedge(x\vee y')$.
\end{enumerate}
If, moreover, the poset $(P,\leq)$ is a lattice, then $(P,\leq,{}',0,1)$ is called an {\em orthomodular lattice}.

Observe that if a poset $(P,\leq,{}')$ with an involution satisfies (B) then it also satisfies (i).

\begin{example}
Of course, every orthomodular lattice is an orthomodular poset. However, there are orthomodular posets that are not lattices. The smallest orthomodular poset that is not a lattice is depicted in Figure~1:

\begin{center}
\setlength{\unitlength}{7mm}
\begin{picture}(18,8)
\put(9,1){\circle*{.3}}
\put(1,3){\circle*{.3}}
\put(3,3){\circle*{.3}}
\put(5,3){\circle*{.3}}
\put(7,3){\circle*{.3}}
\put(9,3){\circle*{.3}}
\put(11,3){\circle*{.3}}
\put(13,3){\circle*{.3}}
\put(15,3){\circle*{.3}}
\put(17,3){\circle*{.3}}
\put(1,5){\circle*{.3}}
\put(3,5){\circle*{.3}}
\put(5,5){\circle*{.3}}
\put(7,5){\circle*{.3}}
\put(9,5){\circle*{.3}}
\put(11,5){\circle*{.3}}
\put(13,5){\circle*{.3}}
\put(15,5){\circle*{.3}}
\put(17,5){\circle*{.3}}
\put(9,7){\circle*{.3}}
\put(1,3){\line(0,2)2}
\put(1,3){\line(1,1)2}
\put(1,3){\line(3,1)6}
\put(1,3){\line(4,1)8}
\put(3,3){\line(-1,1)2}
\put(3,3){\line(1,1)2}
\put(3,3){\line(2,1)4}
\put(3,3){\line(4,1)8}
\put(5,3){\line(-1,1)2}
\put(5,3){\line(0,1)2}
\put(5,3){\line(2,1)4}
\put(5,3){\line(3,1)6}
\put(7,3){\line(-3,1)6}
\put(7,3){\line(-2,1)4}
\put(7,3){\line(3,1)6}
\put(7,3){\line(4,1)8}
\put(9,3){\line(-4,1)8}
\put(9,3){\line(-2,1)4}
\put(9,3){\line(2,1)4}
\put(9,3){\line(4,1)8}
\put(11,3){\line(-4,1)8}
\put(11,3){\line(-3,1)6}
\put(11,3){\line(2,1)4}
\put(11,3){\line(3,1)6}
\put(13,3){\line(-3,1)6}
\put(13,3){\line(-2,1)4}
\put(13,3){\line(0,1)2}
\put(13,3){\line(1,1)2}
\put(15,3){\line(-4,1)8}
\put(15,3){\line(-2,1)4}
\put(15,3){\line(-1,1)2}
\put(15,3){\line(1,1)2}
\put(17,3){\line(-4,1)8}
\put(17,3){\line(-3,1)6}
\put(17,3){\line(-1,1)2}
\put(17,3){\line(0,1)2}
\put(9,1){\line(-4,1)8}
\put(9,1){\line(-3,1)6}
\put(9,1){\line(-2,1)4}
\put(9,1){\line(-1,1)2}
\put(9,1){\line(0,1)2}
\put(9,1){\line(1,1)2}
\put(9,1){\line(2,1)4}
\put(9,1){\line(3,1)6}
\put(9,1){\line(4,1)8}
\put(9,7){\line(-4,-1)8}
\put(9,7){\line(-3,-1)6}
\put(9,7){\line(-2,-1)4}
\put(9,7){\line(-1,-1)2}
\put(9,7){\line(0,-1)2}
\put(9,7){\line(1,-1)2}
\put(9,7){\line(2,-1)4}
\put(9,7){\line(3,-1)6}
\put(9,7){\line(4,-1)8}
\put(8.85,.25){$0$}
\put(.85,2.3){$a$}
\put(2.85,2.3){$b$}
\put(4.85,2.3){$c$}
\put(6.85,2.3){$d$}
\put(8.85,2.3){$e$}
\put(10.85,2.3){$f$}
\put(12.85,2.3){$g$}
\put(14.85,2.3){$h$}
\put(16.85,2.3){$i$}
\put(.8,5.4){$i'$}
\put(2.8,5.4){$h'$}
\put(4.8,5.4){$g'$}
\put(6.8,5.4){$f'$}
\put(8.8,5.4){$e'$}
\put(10.8,5.4){$d'$}
\put(12.8,5.4){$c'$}
\put(14.8,5.4){$b'$}
\put(16.8,5.4){$a'$}
\put(8.85,7.4){$1$}
\put(1.8,-.75){{\rm Fig.~1}. Smallest orthomodular poset not being a lattice}
\end{picture}
\end{center}

\vspace*{3mm}

It is not a lattice since $a\vee b$ does not exist because $i'$ and $f'$ are two distinct minimal upper bounds of $a$ and $b$. This orthomodular poset is finite and hence of finite height. The smallest orthomodular lattice that is not a Boolean algebra is visualized in Figure~2:

\begin{center}
\setlength{\unitlength}{7mm}
\begin{picture}(8,6)
\put(4,1){\circle*{.3}}
\put(1,3){\circle*{.3}}
\put(3,3){\circle*{.3}}
\put(5,3){\circle*{.3}}
\put(7,3){\circle*{.3}}
\put(4,5){\circle*{.3}}
\put(4,1){\line(-3,2)3}
\put(4,1){\line(-1,2)1}
\put(4,1){\line(1,2)1}
\put(4,1){\line(3,2)3}
\put(4,5){\line(-3,-2)3}
\put(4,5){\line(-1,-2)1}
\put(4,5){\line(1,-2)1}
\put(4,5){\line(3,-2)3}
\put(3.85,.3){$0$}
\put(.35,2.85){$a$}
\put(2.35,2.85){$b$}
\put(5.4,2.85){$b'$}
\put(7.4,2.85){$a'$}
\put(3.85,5.4){$1$}
\put(-4.6,-.75){{\rm Fig.~2}. Smallest orthomodular lattice not being a Boolean algebra}
\end{picture}
\end{center}

\vspace*{3mm}

It is easy to show that a horizontal sum of an arbitrary number of orthomodular posets of finite height is an orthomodular poset of finite height again. Hence there exist also infinite orthomodular posets of finite height.
\end{example}

Let $(P,\leq)$ be a poset, $a,b\in P$ and $A,B\subseteq P$. We define $A\leq B$ if $x\leq y$ for all $x\in A$ and all $y\in B$. Instead of $\{a\}\leq\{b\}$, $\{a\}\leq B$ and $A\leq\{b\}$ we simply write $a\leq b$, $a\leq B$ and $A\leq b$, respectively. Further, we define
\begin{align*}
L(A) & :=\{x\in P\mid x\leq A\}, \\
U(A) & :=\{x\in P\mid A\leq x\}.
\end{align*}
Instead of $L(\{a\})$, $L(\{a,b\})$, $L(\{a\}\cup B)$, $L(A\cup B)$ and $L\big(U(A)\big)$ we simply write $L(a)$, $L(a,b)$, $L(a,B)$, $L(A,B)$ and $LU(A)$, respectively. Analogously, we proceed in similar cases. The meaning of expressions like $A\vee B$, $a\vee B$, $A'$ and similar ones is clear.

\begin{lemma}\label{lem2}
Let $\mathbf P=(P,\leq,{}')$ be a poset with a unary operation, $a,b\in P$ and $A,B\subseteq P$. Then the following hold:
\begin{enumerate}[{\rm(i)}]
\item If $\mathbf P$ satisfies {\rm(A)} and $a\leq B$ then $B=a\vee(B\wedge a')$.
\item If $\mathbf P$ satisfies {\rm(B)} and $A\leq b$ then $A=b\wedge(A\vee b')$.
\end{enumerate}
\end{lemma}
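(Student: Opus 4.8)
The plan is to reduce the set-valued identity in (i) to the element-wise identity furnished by condition (A). The first step is simply to unwind the set-operation conventions fixed earlier: by those conventions $B\wedge a'$ denotes $\{y\wedge a'\mid y\in B\}$, and consequently $a\vee(B\wedge a')$ denotes $\{a\vee(y\wedge a')\mid y\in B\}$. So the whole content of (i) is the claim that this last set equals $B$.

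Next I would observe that the hypothesis $a\leq B$ means precisely that $a\leq y$ for every $y\in B$, and this is exactly the premise needed to invoke (A) with $x:=a$ and with $y$ ranging over the elements of $B$. For each such $y$ condition (A) yields $y=a\vee(y\wedge a')$, where the stated equality already presupposes that both $y\wedge a'$ and $a\vee(y\wedge a')$ are defined; thus existence of the relevant meet and join is supplied by (A) itself and need not be argued separately. Combining the two steps gives $a\vee(B\wedge a')=\{a\vee(y\wedge a')\mid y\in B\}=\{y\mid y\in B\}=B$, which is (i). The degenerate case $B=\emptyset$ is covered automatically, since then both sides are empty.

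For (ii) I would argue dually, replacing (A) by (B): the hypothesis $A\leq b$ says $x\leq b$ for every $x\in A$, so (B) applied with $y:=b$ gives $x=b\wedge(x\vee b')$ for each $x\in A$. Unwinding the notation $b\wedge(A\vee b')=\{b\wedge(x\vee b')\mid x\in A\}$ then shows this set equals $A$, as required. I do not expect any genuine obstacle; the proof is essentially a matter of correctly interpreting the pointwise set operations, and the only point deserving care is that the existence of the joins and meets occurring in the formulas is guaranteed by the equalities in (A) and (B) rather than by a prior argument.
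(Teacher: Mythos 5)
Your proposal is correct and follows exactly the paper's own argument: for each element of the set, apply (A) (respectively (B)) pointwise and then read the resulting family of equalities as the set identity via the stated conventions on set operations. Your additional remarks on well-definedness being supplied by (A) and (B) themselves are sound but do not change the substance of the proof.
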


\begin{proof}
\
\begin{enumerate}[(i)]
\item If $\mathbf P$ satisfies (A) and $a\leq B$ then $B=\{x\mid x\in B\}=\{a\vee(x\wedge a')\mid x\in B\}=a\vee(B\wedge a')$.
\item If $\mathbf P$ satisfies (B) and $A\leq b$ then $A=\{x\mid x\in A\}=\{b\wedge(x\vee b')\mid x\in A\}=b\wedge(A\vee b')$.
\end{enumerate}
\end{proof}

A poset $(P,\leq)$ is said to be of {\em finite height} if it contains no infinite chain. In such a case, every non-empty subset of $P$ has at least one minimal and one maximal element. Denote by $\Min A$ and $\Max A$ the set of all minimal and maximal elements of the subset $A$ of $P$, respectively.

\begin{lemma}\label{lem1}
Let $\mathbf P=(P,\leq,{}')$ be a poset of finite height and $a,b\in P$. Then the following hold:
\begin{enumerate}[{\rm(i)}]
\item If $\mathbf P$ satisfies {\rm(A)} and $'$ is an involution then $\Min U(a,b')\wedge b$ is defined.
\item If $\mathbf P$ satisfies {\rm(B)} then $a'\vee\Max L(a,b)$ is defined.
\end{enumerate}
\end{lemma}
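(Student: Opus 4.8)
The plan is to reduce each item to a pointwise existence claim and then invoke the relevant orthomodular identity, (A) or (B), applied to a single carefully chosen comparable pair. Since $\Min U(a,b')\wedge b$ abbreviates $\{x\wedge b\mid x\in\Min U(a,b')\}$ and $a'\vee\Max L(a,b)=\{a'\vee y\mid y\in\Max L(a,b)\}$, to say that these are \emph{defined} means precisely that each individual meet $x\wedge b$, respectively each individual join $a'\vee y$, exists. Finite height guarantees (as noted in the excerpt) that every non-empty subset of $P$ has minimal and maximal elements, so $\Min U(a,b')$ and $\Max L(a,b)$ are the natural objects to consider; but beyond this, finite height plays no role in the argument, and if either set happens to be empty the corresponding claim holds vacuously. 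The substance is the existence of the individual operations.

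For (i), I would fix an arbitrary $x\in\Min U(a,b')$. The only property of $x$ that I will use is that $x\in U(a,b')$, whence $b'\leq x$. Applying (A) to the comparable pair $b'\leq x$ (that is, instantiating the universally quantified variables of (A) to $b'$ and $x$) yields $x=b'\vee(x\wedge b'')$, whose right-hand side is meaningful only if the meet $x\wedge b''$ exists. This is exactly where the involution is used: since $'$ is an involution, $b''=b$, so $x\wedge b$ exists. As $x$ was arbitrary, $\Min U(a,b')\wedge b$ is defined.

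For (ii), I would dually fix an arbitrary $y\in\Max L(a,b)$ and use only that $y\in L(a,b)$, so $y\leq a$. Applying (B) to the comparable pair $y\leq a$ gives $y=a\wedge(y\vee a')$, and the well-definedness of the right-hand side forces the join $y\vee a'$ to exist. Since $y\vee a'=a'\vee y$, this is the desired join; note that no appeal to the involution is needed here, because (B) already produces a join with the complement $a'$ directly. Hence $a'\vee\Max L(a,b)$ is defined.

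I do not expect a genuine obstacle: each part is a one-line application of (A), respectively (B), to a single comparable pair. The only point requiring care is bookkeeping about which complement appears. In (i) the set $U(a,b')$ is built from $b'$ while we must meet with $b$, so the involution is indispensable to bridge $b''$ and $b$; in (ii) the set $L(a,b)$ is built from $a$ while we join with $a'$, and here identity (B) supplies the complemented term automatically, so no involution is invoked. Recognizing that the correct instance is the pair $(b',x)$ in (i) and the pair $(y,a)$ in (ii) — rather than, say, $(a,x)$ or $(y,b)$, which would produce meets and joins with the wrong elements — is the only thing one has to get right.
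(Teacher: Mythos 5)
Your proposal is correct and is essentially the paper's own argument, just written out in full: the paper's one-line proof, namely that (i) follows from $b'\leq\Min U(a,b')$ and (ii) from $\Max L(a,b)\leq a$, is exactly your pointwise application of (A) to the pair $b'\leq x$ (with the involution supplying $b''=b$) and of (B) to the pair $y\leq a$. Your side remarks — that definedness means existence of each individual meet or join, and that the involution is needed only in (i) — correctly unpack what the paper leaves implicit.
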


\begin{proof}
Assertion (i) follows from $b'\leq\Min U(a,b')$, and (ii) follows from $\Max L(a,b)\leq a$.
\end{proof}

Now on every orthomodular poset $(P,\leq,{}',0,1)$ of finite height we can introduce the following everywhere defined operators $\odot,\rightarrow\colon P^2\rightarrow2^P\setminus\{\emptyset\}$:
\begin{align*}
      x\odot y & :=\Min U(x,y')\wedge y, \\
x\rightarrow y & :=x'\vee\Max L(x,y)
\end{align*}
for all $x,y\in P$.

\begin{remark}\label{rem1}
If $\mathbf L=(L,\vee,\wedge,{}',0,1)$ is an orthomodular lattice then
\begin{align*}
\Min U(x,y') & =x\vee y', \\
 \Max L(x,y) & =x\wedge y,
\end{align*}
thus
\begin{align*}
      x\odot y & =(x\vee y')\wedge y, \\
x\rightarrow y & =x'\vee(x\wedge y).
\end{align*}
{\rm(}The expression $(x\vee y')\wedge y$ is called the {\em Sasaki projection} of $x$ to $[0,y]$.{\rm)} This means that $\rightarrow$ is the implication in orthomodular lattices as considered in {\rm\cite{Be}}, {\rm\cite K} and {\rm\cite{MP}}. In the case where $\mathbf L$ is a Boolean algebra {\rm(}axiomatizing classical propositional logic{\rm)} then the just defined operations reduce to
\begin{align*}
      x\odot y & =x\wedge y, \\
x\rightarrow y & =x'\vee y
\end{align*}
as expected. For orthomodular posets of finite height, the operator $\rightarrow$ introduced above coincides with the implication introduced in {\rm\cite{CL22}}.
\end{remark}

Let $(P,\leq)$ be a poset,  We define a binary relation $\sqsubseteq$ on $2^P\setminus\{\emptyset\}$ as follows: For all non-empty subsets $A$ and $B$ of $P$
\[
A\sqsubseteq B:\Leftrightarrow\text{ there exists some }a\in A\text{ and some }b\in B\text{ with }a\leq b
\]
Now let $a,b\in P$ and $A,B,C$ be non-empty subsets of $P$. Then the following hold:
\begin{enumerate}[(i)]
\item $\{a\}\sqsubseteq\{b\}$ if and only if $a\leq b$.
\item $A\sqsubseteq A$
\item $A\leq B$ implies $A\sqsubseteq B$.
\item $A\leq B\sqsubseteq C$ implies $A\sqsubseteq C$.
\item $A\sqsubseteq B\leq C$ implies $A\sqsubseteq C$.
\end{enumerate}
Statement (i) shows that if we identify the elements $x$ of $P$ with the corresponding singletons $\{x\}$ then the restriction of the relation $\sqsubseteq$ to $P$ coincides just with the original relation $\leq$ on $P$.

In the following, if $P$ is a non-empty set and $\otimes$ a mapping from $P^2$ to $2^P$, i.e.\ a binary operator on $P$, then we define
\[
A\otimes y:=\bigcup_{x\in A}(x\otimes y)
\]
for all $A\subseteq P$ and $y\in P$.

For the operators $\odot$ and $\rightarrow$ defined above we can prove the following properties.

\begin{lemma}
Let $(P,\leq,{}',0,1)$ be an orthomodular poset of finite height and $a,b,c\in P$ and define
\begin{align*}
      x\odot y & :=\Min U(x,y')\wedge y, \\
x\rightarrow y & :=x'\vee\Max L(x,y)
\end{align*}
for all $x,y\in P$. Then the following hold:
\begin{enumerate}[{\rm(i)}]
\item $\Min U(a,b)\odot a=a$
\item $a\leq b$ implies $a\rightarrow b=1$.
\item $1\rightarrow a=a$
\item $a\leq b$ implies $c\rightarrow a\sqsubseteq c\rightarrow b$.
\end{enumerate}
\end{lemma}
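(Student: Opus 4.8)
The plan is to treat the four claims separately, in each case reducing the set-valued operators to ordinary elements by exploiting available comparabilities, and invoking Lemma~\ref{lem1} whenever a meet or join needs to be known to be defined. Throughout I use that an orthomodular poset satisfies both (A) and (B), that $\Min$ and $\Max$ of a non-empty subset are non-empty by finite height, and the convention $A\odot y=\bigcup_{x\in A}(x\odot y)$. For (i) the key observation is that every $x\in\Min U(a,b)$ satisfies $x\geq a$. Hence for any $m\in U(x,a')$ we have $m\geq x\geq a$, so $m\wedge a=a$; in particular $\Min U(x,a')\wedge a=\{a\}$, i.e.\ $x\odot a=\{a\}$. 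Taking the union over $x\in\Min U(a,b)$, which is non-empty since $1\in U(a,b)$, yields $\Min U(a,b)\odot a=\{a\}=a$.

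For (ii) and (iii) the idea is to collapse the relevant set $\Max L(\cdot,\cdot)$ to a singleton. If $a\leq b$ then $L(a,b)=L(a)$, so $\Max L(a,b)=\{a\}$ and therefore $a\rightarrow b=\{a'\vee a\}=\{1\}=1$, using that $a\perp a'$ makes $a'\vee a$ defined and that $'$ is a complementation. Likewise $L(1,a)=L(a)$ gives $\Max L(1,a)=\{a\}$, whence $1\rightarrow a=\{1'\vee a\}=\{0\vee a\}=\{a\}=a$.

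For (iv) I first note that $a\leq b$ forces $L(c,a)\subseteq L(c,b)$. Choosing any $m_a\in\Max L(c,a)$, it lies in $L(c,b)$, and finite height supplies some $m_b\in\Max L(c,b)$ with $m_a\leq m_b$: the set of elements of $L(c,b)$ lying above $m_a$ is non-empty and has a maximal element, which is then maximal in $L(c,b)$. Since $m_a,m_b\leq c$, the joins $c'\vee m_a$ and $c'\vee m_b$ are defined, and $m_a\leq m_b$ gives $c'\vee m_a\leq c'\vee m_b$ because $c'\vee m_b$ is an upper bound of $\{c',m_a\}$ and $c'\vee m_a$ is the least such. As $c'\vee m_a\in c\rightarrow a$ and $c'\vee m_b\in c\rightarrow b$, these two elements witness $c\rightarrow a\sqsubseteq c\rightarrow b$.

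The main obstacle is part (iv): unlike the lattice case one cannot simply invoke monotonicity of a lattice join, so I must argue two things carefully. First, that a maximal element of the larger lower-bound set can be selected above a prescribed maximal element of the smaller one, which is where finite height is genuinely used. Second, that $c'\vee(\cdot)$ is order-preserving on the elements where it is defined, which I obtain from the least-upper-bound property rather than from any lattice identity. The remaining parts are essentially bookkeeping once the relevant comparabilities and the existence results of Lemma~\ref{lem1} are in place.
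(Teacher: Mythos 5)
Your proof is correct and follows essentially the same route as the paper's: you collapse the set-valued operators to singletons via the comparabilities $x\geq a$, $L(a,b)=L(a)$ and $L(1,a)=L(a)$ for (i)--(iii), and for (iv) you produce witnesses $m_a\leq m_b$ by finite height and use the least-upper-bound property of $c'\vee(\cdot)$, which is precisely what the paper's compressed chain of implications $L(c,a)\subseteq L(c,b)\Rightarrow\Max L(c,a)\sqsubseteq\Max L(c,b)\Rightarrow c'\vee\Max L(c,a)\sqsubseteq c'\vee\Max L(c,b)$ tacitly relies on. The only (harmless) deviation is in (i), where the paper computes $\Min U(x,a')=\{1\}$ from $a\vee a'=1$, whereas you use only the more elementary observation that $m\geq a$ forces $m\wedge a=a$; both are valid one-line reductions.
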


\begin{proof}
Because of Lemma~\ref{lem1}, $\odot$ and $\rightarrow$ are well-defined.
\begin{enumerate}[(i)]
\item $\Min U(a,b)\odot a=\bigcup\limits_{x\in\Min U(a,b)}(x\odot a)=\bigcup\limits_{x\in\Min U(a,b)}\big(\Min U(x,a')\wedge a\big)= \\
=\bigcup\limits_{x\in\Min U(a,b)}\{1\wedge a\}=\{a\}$
\item $a\leq b$ implies $a\rightarrow b=a'\vee\Max L(a,b)=a'\vee a=1$.
\item $1\rightarrow a=1'\vee\Max L(1,a)=0\vee a=a$
\item Everyone of the following statements implies the next one:
\begin{align*}
                               a & \leq b, \\
                          L(c,a) & \subseteq L(c,b), \\
                     \Max L(c,a) & \sqsubseteq\Max L(c,b), \\
c\rightarrow a=c'\vee\Max L(c,a) & \sqsubseteq c'\vee\Max L(c,b)=c\rightarrow b.
\end{align*}
\end{enumerate}
\end{proof}

Now we introduce our main concept.

\begin{definition}\label{def1}
An {\em operator residuated structure} is an ordered six-tuple $\mathbf R=(P,\leq,\odot,\rightarrow,0,1)$ satisfying the following conditions:
\begin{enumerate}[{\rm(i)}]
\item $(P,\leq,0,1)$ is a bounded poset.
\item $\odot$ and $\rightarrow$ are mappings from $P^2$ to $2^P\setminus\{\emptyset\}$.
\item $x\odot y\sqsubseteq z$ if and only if $x\sqsubseteq y\rightarrow z$.
\item $x\odot1\approx1\odot x\approx x$
\item $y\rightarrow0\leq x$ implies $x\odot y=x\wedge y$.
\item $y\leq x$ implies $x\rightarrow y=(x\rightarrow0)\vee y$.
\end{enumerate}
Condition {\rm(iii)} is called {\em operator left adjointness}. $\mathbf R$ is
\begin{itemize}
\item called {\em idempotent} if $x\odot x\approx x$,
\item called {\em divisible} if $(x\rightarrow y)\odot x\approx\Max L(x,y)$,
\item said to satisfy the {\em double negation law} if $(x\rightarrow0)\rightarrow0\approx x$.
\item said to satisfy the {\em contraposition law} if $x\leq y$ implies $y\rightarrow0\leq x\rightarrow0$.
\end{itemize}
\end{definition}

Now we can prove that every orthomodular poset of finite height can be converted into such an operator residuated structure.

\begin{theorem}\label{th1}
Let $\mathbf P=(P,\leq,{}',0,1)$ be an orthomodular poset of finite height and define mappings $\odot,\rightarrow\colon P^2\rightarrow2^P\setminus\{\emptyset\}$ by
\begin{align*}
      x\odot y & :=\Min U(x,y')\wedge y, \\
x\rightarrow y & :=x'\vee\Max L(x,y)
\end{align*}
for all $x,y\in P$. Then $\mathbb R(\mathbf P):=(P,\leq,\odot,\rightarrow,0,1)$ is an idempotent and divisible operator residuated structure satisfying both the double negation law as well as the contraposition law and $x\rightarrow0\approx x'$.
\end{theorem}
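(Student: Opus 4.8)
The plan is to verify each clause of Definition~\ref{def1} together with the four additional properties, disposing of the cheap consequences first and reserving the left adjointness (iii) for last. Well-definedness (that $\odot$ and $\rightarrow$ land in $2^P\setminus\{\emptyset\}$) is already furnished by Lemma~\ref{lem1} and finite height, and conditions (i)--(ii) of Definition~\ref{def1} are immediate. I would begin with the identity $x\rightarrow0\approx x'$: since $0$ is the bottom element, $L(x,0)=\{0\}$, whence $\Max L(x,0)=\{0\}$ and $x\rightarrow0=x'\vee0=x'$. From this the double negation law is immediate, since $(x\rightarrow0)\rightarrow0=x''=x$ because $'$ is an involution, and the contraposition law follows because $'$ is antitone: $x\leq y$ gives $y'\leq x'$, that is, $y\rightarrow0\leq x\rightarrow0$.

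Next I would dispatch the boundary and order conditions. For (iv), $U(x,0)=U(x)$ so $\Min U(x,1')=\Min U(x)=\{x\}$ and $x\odot1=\{x\wedge1\}=\{x\}$, while $U(1,x')=\{1\}$ gives $1\odot x=\{1\wedge x\}=\{x\}$. For idempotency, $x$ and $x'$ are orthogonal so $x\vee x'=1$ and $U(x,x')=\{1\}$, whence $x\odot x=\{1\wedge x\}=\{x\}$. For (v), the hypothesis $y\rightarrow0=y'\leq x$ yields $U(x,y')=U(x)$, so $\Min U(x,y')=\{x\}$ and $x\odot y=\{x\wedge y\}$, the meet existing by Lemma~\ref{lem1}. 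For (vi), when $y\leq x$ one has $L(x,y)=L(y)$, hence $\Max L(x,y)=\{y\}$ and $x\rightarrow y=\{x'\vee y\}=(x\rightarrow0)\vee y$, the join being defined since $y\perp x'$. Divisibility is the one computation here that genuinely uses orthomodularity: writing $x\rightarrow y=\{x'\vee m:m\in\Max L(x,y)\}$ and expanding $(x'\vee m)\odot x$, the inclusion $x'\leq x'\vee m$ forces $U(x'\vee m,x')=U(x'\vee m)$ and collapses $\Min U(x'\vee m,x')$ to $\{x'\vee m\}$; then condition~(B) applied to $m\leq x$ gives $(x'\vee m)\wedge x=m$, and taking the union over $m$ recovers $\Max L(x,y)$.

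The heart of the proof, and the step I expect to be the main obstacle, is the operator left adjointness~(iii). Unwinding the definitions, $x\odot y\sqsubseteq z$ says that some $u\in\Min U(x,y')$ satisfies $u\wedge y\leq z$, while $x\sqsubseteq y\rightarrow z$ says that some $m\in\Max L(y,z)$ satisfies $x\leq y'\vee m$. For the forward direction I would take such a $u$, note $u\wedge y\in L(y,z)$, choose $m\in\Max L(y,z)$ with $u\wedge y\leq m$, and use condition~(A) on $y'\leq u$ to write $u=y'\vee(u\wedge y)\leq y'\vee m$, so that $x\leq u\leq y'\vee m$. For the converse I would take such an $m$, observe $y'\vee m\in U(x,y')$, pick a minimal $u\in\Min U(x,y')$ lying below $y'\vee m$, and apply condition~(B) on $m\leq y$ to get $(y'\vee m)\wedge y=m$; monotonicity of the meet then yields $u\wedge y\leq(y'\vee m)\wedge y=m\leq z$.

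The delicate points throughout are that the partial operations $\vee$ and $\wedge$ really are defined wherever I invoke them---guaranteed by orthogonality via condition~(i) of Definition~\ref{def3} and by Lemma~\ref{lem1}---and that monotonicity of these operations holds for the particular comparisons used, which follows directly from their characterization as suprema and infima of the relevant pairs. Once adjointness is secured, collecting the verified clauses shows that $\mathbb R(\mathbf P)$ is an idempotent, divisible operator residuated structure satisfying the double negation and contraposition laws with $x\rightarrow0\approx x'$.
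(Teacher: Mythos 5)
Your proposal is correct and takes essentially the same route as the paper: identical computations for $x\rightarrow0\approx x'$, conditions (iv)--(vi), idempotency, divisibility via condition (B) applied to $m\leq x$, and double negation and contraposition from $'$ being an antitone involution. The only cosmetic difference is in left adjointness (iii), which you prove as two element-wise implications with explicit choices of minimal and maximal witnesses, whereas the paper runs one cyclic chain of implications and invokes the set-level Lemma~\ref{lem2} at precisely the two junctures where you apply (A) and (B) directly.
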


\begin{proof}
Let $a,b,c\in P$. Because of Lemma~\ref{lem1}, $\odot$ and $\rightarrow$ are well-defined. Moreover, we have $x\rightarrow0\approx x'\vee\Max L(x,0)=x'$.
\begin{enumerate}
\item[(i)] and (ii) are evident.
\item[(iii)] Anyone of the following statements implies the next one:
\begin{align*}
                                                                a\odot b & \sqsubseteq c, \\
                                                    \Min U(a,b')\wedge b & \sqsubseteq c, \\
         \text{There exists some }d\in\Min U(a,b')\wedge b\text{ with }d & \leq c, \\
         \text{There exists some }d\in\Min U(a,b')\wedge b\text{ with }d & \in L(b,c), \\
         \text{There exists some }d\in\Min U(a,b')\wedge b\text{ with }d & \sqsubseteq\Max L(b,c), \\
                                                    \Min U(a,b')\wedge b & \sqsubseteq \Max L(b,c), \\
                                    b'\vee\big(\Min U(a,b')\wedge b\big) & \sqsubseteq b'\vee\Max L(b,c), \\
                                                       a\leq\Min U(a,b') & \sqsubseteq b\rightarrow c, \\
                                                                       a & \sqsubseteq b\rightarrow c, \\
                                                                       a & \sqsubseteq b'\vee\Max L(b,c), \\
           \text{There exists some }e\in b'\vee\Max L(b,c)\text{ with }a & \leq e, \\   
           \text{There exists some }e\in b'\vee\Max L(b,c)\text{ with }e & \in U(a,b'), \\   
\text{There exists some }e\in b'\vee\Max L(b,c)\text{ with }\Min U(a,b') & \sqsubseteq e, \\   
                                                            \Min U(a,b') & \sqsubseteq b'\vee\Max L(b,c), \\
                                                    \Min U(a,b')\wedge b & \sqsubseteq\big(b'\vee\Max L(b,c)\big)\wedge b, \\
                                                                a\odot b & \sqsubseteq\Max L(b,c)\leq c, \\
                                                                a\odot b & \sqsubseteq c.
\end{align*}
In the eighth and sixteenth statement we use Lemma~\ref{lem2}.
\item[(iv)] We have $x\odot1\approx\Min U(x,1')\wedge1\approx x$ and $1\odot x\approx\Min U(1,x')\wedge x\approx x$.
\item[(v)] $b\rightarrow0\leq a$ implies $b'\leq a$ and hence $a\odot b=\Min U(a,b')\wedge b=a\wedge b$.
\item[(vi)] $b\leq a$ implies $a\rightarrow b=a'\vee\Max L(a,b)=a'\vee b=(a\rightarrow0)\vee b$.
\end{enumerate}
Finally, we have
\begin{align*}
                   x\odot x & \approx\Min U(x,x')\wedge x\approx x, \\
    (x\rightarrow y)\odot x & \approx\bigcup_{z\in x\rightarrow y}(z\odot x)\approx\bigcup_{z\in x'\vee\Max L(x,y)}\big(\Min U(z,x')\wedge x\big)\approx \\
                            & \approx\bigcup_{u\in\Max L(x,y)}\big(\Min U(x'\vee u,x')\wedge x\big)\approx\bigcup_{u\in\Max L(x,y)}\{(x'\vee u)\wedge x\}\approx \\
                            & \approx\bigcup_{u\in\Max L(x,y)}\{u\}\approx\Max L(x,y), \\
(x\rightarrow0)\rightarrow0 & \approx x''\approx x.
\end{align*}
In the last but one line we used orthomodularity. Finally, $a\leq b$ implies $b\rightarrow0=b'\leq a'=a\rightarrow0$.
\end{proof}

It is important that we can also prove the converse.

\begin{theorem}
Let $\mathbf R=(R,\leq,\odot,\rightarrow,0,1)$ be a divisible operator residuated structure of finite height satisfying both the double negation law as well as the contraposition law and define
\[
x':=x\rightarrow0
\]
for all $x\in R$. Then $\mathbb P(\mathbf R):=(R,\leq,{}',0,1)$ is an orthomodular poset.
\end{theorem}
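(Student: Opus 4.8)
The plan is to verify directly the four requirements of Definition~\ref{def3} for $(R,\le,{}',0,1)$: that $'$ is an antitone involution, that it is a complementation, and that conditions (i) and (ii) hold. Boundedness is inherited from $\mathbf R$. The involution and antitonicity of $'$ are immediate from the hypotheses: the double negation law gives $x''=(x\rightarrow0)\rightarrow0=x$, and the contraposition law $x\le y\Rightarrow y\rightarrow0\le x\rightarrow0$ is precisely antitonicity of $'$. Since an antitone involution of a bounded poset interchanges the least and greatest element, I also obtain $0'=1$ and $1'=0$, which I will use freely below.

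For the complementation I would first isolate the following consequence of operator left adjointness (Definition~\ref{def1}(iii)): taking $z=0$ and using $y\rightarrow0=y'$, one has $x\odot y\sqsubseteq0$ iff $x\sqsubseteq y'$, and unwinding the relation $\sqsubseteq$ this reads $0\in x\odot y$ iff $x\le y'$. Applying this with $y=x'$ gives $0\in x\odot x'$, because $x\le x''=x$. On the other hand, Definition~\ref{def1}(v) applies to the pair $(x,x')$, since $(x')'=x\le x$, so that $x\odot x'=x\wedge x'$ is a single element; hence $x\wedge x'=0$. Consequently $L(x,x')=\{0\}$, and dualising through the antitone involution (so that $U(x,x')=\big(L(x,x')\big)'=\{1\}$) yields $x\vee x'=1$. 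Thus $'$ is a complementation.

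It remains to establish orthomodularity, which I expect to be the crux. I would verify the equivalent form (B) stated after Definition~\ref{def3}: for $y\le x$, Definition~\ref{def1}(vi) guarantees that $s:=x\rightarrow y=x'\vee y$ exists, and since $x'\le s$, Definition~\ref{def1}(v) applied to the pair $(s,x)$ gives $s\odot x=s\wedge x$. Divisibility computes the same product as $(x\rightarrow y)\odot x=\Max L(x,y)=\{y\}$, the last equality because $y\le x$ forces $L(x,y)=L(y)$ and hence $\Max L(x,y)=\{y\}$. Comparing the two evaluations gives $(x'\vee y)\wedge x=y$, which is exactly condition (B) for the pair $y\le x$. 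Since (B) is equivalent to orthomodularity (ii), and, as the excerpt observes, a poset with an involution satisfying (B) automatically satisfies condition (i), all requirements of Definition~\ref{def3} are met and $\mathbb P(\mathbf R)$ is an orthomodular poset. The delicate point throughout is the coordinated use of the three structural axioms (v), (vi) and divisibility together with the set-relation $\sqsubseteq$; once the adjointness consequence $0\in x\odot y\iff x\le y'$ is in hand, both the complementation and the orthomodular law fall out by choosing the arguments so that the multivalued operator $\odot$ collapses to a genuine meet.
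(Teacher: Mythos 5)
Your proposal is correct and follows essentially the same route as the paper: the crux, orthomodularity in the form (B), is obtained exactly as in the paper by noting that the double negation and contraposition laws make $'$ an antitone involution and then combining (vi), (v) and divisibility to get $(x'\vee y)\wedge x=(x\rightarrow y)\odot x=\Max L(x,y)=y$ for $y\leq x$. The only (harmless) divergence is the complementation: you derive $x\wedge x'=0$ from operator left adjointness at $z=0$ (giving $0\in x\odot x'$) together with (v), whereas the paper obtains it for free by applying the just-proved identity to $0\leq a$, both arguments then yielding $x\vee x'=1$ via the antitone involution.
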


\begin{proof}
Let $a,b\in R$. Since $\mathbf R$ satisfies both the double negation law as well as the contraposition law, $'$ is an antitone involution on $(R,\leq)$. Now assume $a\leq b$. Because of (vi) of Definition~\ref{def1} we have $b\rightarrow a=(b\rightarrow0)\vee a=b'\vee a\geq b'$. According to (v) of Definition~\ref{def1} we conclude $(b\rightarrow a)\odot b=(b\rightarrow a)\wedge b$ which together with divisibility yields
\[
b\wedge(a\vee b')=(b'\vee a)\wedge b=(b\rightarrow a)\wedge b=(b\rightarrow a)\odot b=\Max L(b,a)=a
\]
showing orthomodularity. Because of $0\leq a$ we obtain $0=a\wedge(0\vee a')=a\wedge a'$ completing the proof of the theorem.
\end{proof}

The following theorem shows that the correspondence between orthomodular posets of finite height and certain operator residuated structures of finite height is nearly one-to-one.

\begin{theorem}\label{th2}
\
\begin{enumerate}[{\rm(i)}]
\item Let $\mathbf P=(P,\leq,{}',0,1)$ be an orthomodular poset of finite height. Then $\mathbb P\big(\mathbb R(\mathbf P)\big)=\mathbf P$.
\item Let $\mathbf R=(R,\leq,\odot,\rightarrow,0,1)$ be a divisible operator residuated structure of finite height satisfying both the double negation law as well as the contraposition law. Then $\mathbb R\big(\mathbb P(\mathbf R)\big)=\mathbf R$ if and only if
\begin{align*}
\Min U(x,y\rightarrow0)\wedge y & \approx x\odot y, \\
 (x\rightarrow0)\vee\Max L(x,y) & \approx x\rightarrow y.
\end{align*}
\end{enumerate}
\end{theorem}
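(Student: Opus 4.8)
The plan is to exploit that both constructions $\mathbb R(\cdot)$ and $\mathbb P(\cdot)$ leave the underlying bounded poset untouched: one merely adds the derived operators $\odot,\rightarrow$ on top of $(\,\leq,0,1)$, the other merely replaces them by a derived complementation ${}'$. Consequently each round-trip identity reduces to comparing just that derived structure, and both parts become a matter of unwinding the definitions together with the already-established identity $x\rightarrow0\approx x'$ from Theorem~\ref{th1}.

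For part~(i), I would first note that $\mathbb R(\mathbf P)$ is, by Theorem~\ref{th1}, a divisible operator residuated structure satisfying the double negation and contraposition laws, and that it is of finite height because its carrier $(P,\leq)$ is; hence the construction $\mathbb P$ is legitimately applicable to it. The structure $\mathbb P\big(\mathbb R(\mathbf P)\big)$ is then $(P,\leq,{}^*,0,1)$, where the complementation is given by $x^*:=x\rightarrow0$ computed in $\mathbb R(\mathbf P)$. But Theorem~\ref{th1} yields exactly $x\rightarrow0\approx x'$, so $x^*=x'$ for every $x$; since the bounded poset agrees verbatim, $\mathbb P\big(\mathbb R(\mathbf P)\big)=\mathbf P$.

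For part~(ii), I would unwind the two constructions in sequence. By the preceding theorem, $\mathbb P(\mathbf R)=(R,\leq,{}',0,1)$ with $x':=x\rightarrow0$ is an orthomodular poset, and it is of finite height, so Lemma~\ref{lem1} guarantees that the operators produced by $\mathbb R$ are well-defined. Applying $\mathbb R$ to $\mathbb P(\mathbf R)$ then yields the six-tuple $(R,\leq,\odot^*,\rightarrow^*,0,1)$ with
\begin{align*}
      x\odot^* y & =\Min U(x,y\rightarrow0)\wedge y, \\
x\rightarrow^* y & =(x\rightarrow0)\vee\Max L(x,y),
\end{align*}
since the complementation fed into the defining formulas is precisely $y'=y\rightarrow0$. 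Because this six-tuple shares carrier, order and bounds with $\mathbf R$, the equality $\mathbb R\big(\mathbb P(\mathbf R)\big)=\mathbf R$ holds if and only if $\odot^*=\odot$ and $\rightarrow^*=\rightarrow$, that is, if and only if the two displayed identities of the statement hold. This establishes the asserted equivalence.

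I do not expect a genuine obstacle here; the statement is, after one substitutes the definitions, essentially a reformulation. The only points demanding care are bookkeeping ones: verifying that finite height is inherited so that the opposite construction is applicable at each stage, and invoking Lemma~\ref{lem1} in part~(ii) to confirm that the recomputed operators $\odot^*,\rightarrow^*$ are defined at all before one compares them with the originals.
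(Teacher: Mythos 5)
Your proposal is correct and follows essentially the same route as the paper's proof: part (i) reduces to the identity $x\rightarrow0\approx x'$ from Theorem~\ref{th1}, and part (ii) to substituting the derived complementation $x'=x\rightarrow0$ into the defining formulas of $\mathbb R$, so that the round-trip equality holds exactly when the recomputed operators coincide with the originals. Your additional bookkeeping (finite height being inherited, Lemma~\ref{lem1} ensuring well-definedness of the recomputed operators) is left implicit in the paper but is a harmless and sensible elaboration.
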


\begin{proof}
\
\begin{enumerate}[(i)]
\item If $\mathbb R(\mathbf P)=(P,\leq,\odot,\rightarrow,0,1)$ and $\mathbb P\big(\mathbb R(\mathbf P)\big)=(P,\leq,{}^*,0,1)$ then according to Theorem~\ref{th1} we have $x^*\approx x\rightarrow0\approx x'$.
\item If $\mathbb P(\mathbf R)=(R,\leq,{}',0,1)$ and $\mathbb R\big(\mathbb P(\mathbf R)\big)=(R,\leq,\bullet,\Rightarrow,0,1)$ then
\begin{align*}
            x' & \approx x\rightarrow0, \\
    x\bullet y & \approx\Min U(x,y\rightarrow0)\wedge y, \\
x\Rightarrow y & \approx(x\rightarrow0)\vee\Max L(x,y).
\end{align*}
\end{enumerate}
\end{proof}

According to Theorem~\ref{th2}, $\mathbb R(\mathbf P)$ contains the whole information on $\mathbf P$.

In the following we show that our construction of an operator residuated structure just introduced for othomodular posets of finite height can be easily modified for more general structures.

The following concepts were introduced for lattices in \cite{CL18a} and for posets in \cite{CL18b}.

\begin{definition}
\
\begin{itemize}
\item A {\em weakly orthomodular poset} is a bounded poset $(P,\leq,{}',0,1)$ with complementation such that $x,\in P$ and $x\leq y$ together imply that $x\vee(y\wedge x')$ is defined and that $x\vee(y\wedge x')=y$.
\item A {\em dually weakly orthomodular poset} is a bounded poset $(P,\leq,{}',0,1)$ with complementation such that $x,y\in P$ and $x\leq y$ together imply that $y\wedge(x\vee y')$ is defined and that $y\wedge(x\vee y')=x$.
\end{itemize}
\end{definition}

Hence, weakly orthomodular posets satisfy condition (A) and dually weakly orthomodular posets condition (B) mentioned after Definition~\ref{def3}.

It should be remarked that the complementation of a weakly orthomodular poset or a dually weakly orthomodular poset may neither be antitone nor an involution. Of course, a poset is orthomodular if and only if it is both weakly orthomodular and dually weakly orthomodular and if its complementation is an antitone involution.

In the following we show an example of a weakly orthomodular, respectively dually weakly orthomodular posets that is not orthomodular.

\begin{example}
The lattice $\mathbf W$ depicted in Figure~3
\vspace*{-2mm}
\begin{center}
\setlength{\unitlength}{7mm}
\begin{picture}(8,8)
\put(5,1){\circle*{.3}}
\put(1,3){\circle*{.3}}
\put(3,3){\circle*{.3}}
\put(5,3){\circle*{.3}}
\put(7,3){\circle*{.3}}
\put(3,5){\circle*{.3}}
\put(5,5){\circle*{.3}}
\put(7,5){\circle*{.3}}
\put(5,7){\circle*{.3}}
\put(5,1){\line(-2,1)4}
\put(5,1){\line(-1,1)2}
\put(5,1){\line(0,1)2}
\put(5,1){\line(1,1)2}
\put(3,5){\line(0,-1)2}
\put(3,5){\line(1,-1)2}
\put(5,5){\line(-1,-1)2}
\put(5,5){\line(1,-1)2}
\put(7,5){\line(-1,-1)2}
\put(7,5){\line(0,-1)2}
\put(5,7){\line(-1,-1)4}
\put(5,7){\line(0,-1)2}
\put(5,7){\line(1,-1)2}
\put(4.85,.3){$0$}
\put(.3,2.85){$a$}
\put(2.3,2.85){$b$}
\put(4.3,2.85){$c$}
\put(7.4,2.85){$d$}
\put(2.3,4.85){$e$}
\put(4.3,4.85){$f$}
\put(7.4,4.85){$g$}
\put(4.85,7.35){$1$}
\put(-5.5,-.75){{\rm Fig.~3}. Weakly orthomodular poset being not dually weakly orthomodular}
\end{picture}
\end{center}
\vspace*{3mm}
with complementation defined by
\[
\begin{array}{c|ccccccccc}
x  & 0 & a & b & c & d & e & f & g & 1 \\
\hline
x' & 1 & g & g & f & e & d & c & b & 0
\end{array}
\]
is weakly orthomodular, but neither orthomodular nor dually weakly orthomodular. Hence the horizontal sum of $\mathbf W$ and the othomodular poset $\mathbf P$ from Figure~1 is a weakly orthomodular poset that is neither a lattice nor dually weakly orthomodular. On the contrary, the lattice $\mathbf D$ visualized in Figure~4
\vspace*{-2mm}
\begin{center}
\setlength{\unitlength}{7mm}
\begin{picture}(8,8)
\put(3,1){\circle*{.3}}
\put(1,3){\circle*{.3}}
\put(3,3){\circle*{.3}}
\put(5,3){\circle*{.3}}
\put(1,5){\circle*{.3}}
\put(3,5){\circle*{.3}}
\put(5,5){\circle*{.3}}
\put(7,5){\circle*{.3}}
\put(3,7){\circle*{.3}}
\put(3,1){\line(-1,1)2}
\put(3,1){\line(0,1)2}
\put(3,1){\line(1,1)4}
\put(1,5){\line(0,-1)2}
\put(1,5){\line(1,-1)2}
\put(3,5){\line(-1,-1)2}
\put(3,5){\line(1,-1)2}
\put(5,5){\line(-1,-1)2}
\put(5,5){\line(0,-1)2}
\put(3,7){\line(-1,-1)2}
\put(3,7){\line(0,-1)2}
\put(3,7){\line(1,-1)2}
\put(3,7){\line(2,-1)4}
\put(2.85,.3){$0$}
\put(.3,2.85){$a$}
\put(2.3,2.85){$b$}
\put(5.4,2.85){$c$}
\put(.3,4.85){$d$}
\put(2.3,4.85){$e$}
\put(5.4,4.85){$f$}
\put(7.4,4.85){$g$}
\put(2.85,7.35){$1$}
\put(-5.5,-.75){{\rm Fig.~4}. Dually weakly orthomodular poset being not weakly orthomodular}
\end{picture}
\end{center}
\vspace*{3mm}
with complementation defined by
\[
\begin{array}{c|ccccccccc}
x  & 0 & a & b & c & d & e & f & g & 1 \\
\hline
x' & 1 & f & e & d & c & b & a & a & 0
\end{array}
\]
is dually weakly orthomodular, but neither orthomodular nor weakly orthomodular. Hence the horizontal sum of $\mathbf D$ and $\mathbf P$ is a dually weakly orthomodular poset that is neither a lattice nor weakly orhomodular.
\end{example}

For such posets we can prove results concerning operator residuation without the contraposition law. However, we must assume the existence of operators $\odot$ and $\rightarrow$ since they are not automatically defined if only a weaker variant of the orthomodular law is assumed.

\begin{theorem}\label{th4}
Let $\mathbf P=(P,\leq,{}',0,1)$ be a bounded poset of finite height with complementation satisfying the identity $x''\approx x$ and define mappings $\odot,\rightarrow\colon P^2\rightarrow2^P\setminus\{\emptyset\}$ by
\begin{align*}
      x\odot y & :=\Min U(x,y')\wedge y, \\
x\rightarrow y & :=x'\vee\Max L(x,y)
\end{align*}
whenever defined {\rm(}$x,y\in P${\rm)}. Then $0'\approx1$ and $1'\approx0$ and the following hold:
\begin{enumerate}[{\rm(a)}]
\item Assume $\mathbf P$ to be weakly orthomodular. Then $\odot$ is well-defined. Assume $\rightarrow$ to be well-defined, too. Then $\mathbb R(\mathbf P):=(P,\leq,\odot,\rightarrow,0,1)$ satisfies all the conditions of an idempotent operator residuated structure satisfying the double negation law, $x\vee(x\rightarrow0)\approx1$ and $x\rightarrow0\approx x'$, only {\rm(ii)} has to be replaced by
\begin{enumerate}
\item[{\rm(ii')}] $x\odot y\sqsubseteq z$ implies $x\sqsubseteq y\rightarrow z$.
\end{enumerate}
\item Assume $\mathbf P$ to be dually weakly orthomodular. Then $\rightarrow$ is well-defined. Assume $\odot$ to be well-defined, too. Then $\mathbb R(\mathbf P):=(P,\leq,\odot,\rightarrow,0,1)$ satisfies all the conditions of an idempotent divisible operator residuated structure satisfying the double negation law and $x\rightarrow0\approx x'$, only {\rm(ii)} has to be replaced by
\begin{enumerate}
\item[{\rm(ii'')}] $x\sqsubseteq y\rightarrow z$ implies $x\odot y\sqsubseteq z$.
\end{enumerate}
\end{enumerate}
\end{theorem}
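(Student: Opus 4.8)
The plan is to recycle the computations from the proof of Theorem~\ref{th1}, tracking carefully which of the two reformulations (A) and (B) of orthomodularity each individual step consumes, and to observe that the biconditional operator left adjointness $x\odot y\sqsubseteq z\Leftrightarrow x\sqsubseteq y\rightarrow z$ splits into two separate implications, the forward one powered by (A) and the backward one by (B).

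First I would settle the preliminaries $0'\approx1$ and $1'\approx0$: complementation gives $0\vee0'=1$ while $0\vee0'=0'$, whence $0'=1$, and dually $1\wedge1'=0$ with $1\wedge1'=1'$ forces $1'=0$; these use only that $'$ is a complementation. I would then record $x\rightarrow0\approx x'\vee\Max L(x,0)=x'\vee0=x'$ whenever $\rightarrow$ is defined, so that the double negation law is immediate from $x''\approx x$, while $x\vee(x\rightarrow0)=x\vee x'=1$ and $x\odot x=\Min U(x,x')\wedge x=1\wedge x=x$ (using $U(x,x')=\{1\}$, since $x\vee x'=1$) are forced by complementation alone. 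Conditions (i), (iv), (v) and (vi) of Definition~\ref{def1} then go through exactly as in Theorem~\ref{th1}: those lines merely rearrange the definitions via $\Min U(x,0)=\{x\}$, $U(1,x')=\{1\}$ and $\Max L(x,y)=\{y\}$ for $y\leq x$, and none of them invokes (A) or (B) beyond what well-definedness of the operators already supplies.

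For part (a) I would obtain well-definedness of $\odot$ from Lemma~\ref{lem1}(i), which needs precisely that $'$ be an involution (guaranteed by $x''\approx x$) together with (A). The one structural point is (ii'): I would run the upper half of the adjointness chain of Theorem~\ref{th1}, from $a\odot b\sqsubseteq c$ down to $a\sqsubseteq b\rightarrow c$. The sole appeal to Lemma~\ref{lem2} there is the identity $b'\vee(\Min U(a,b')\wedge b)=\Min U(a,b')$, which is Lemma~\ref{lem2}(i) (together with $(b')'=b$) and hence consumes only (A); everything else is the definition of $\sqsubseteq$ and its monotonicity. Dually, for part (b) I would get well-definedness of $\rightarrow$ from Lemma~\ref{lem1}(ii) (which needs only (B)) and prove (ii'') by running the lower half of the same chain, from $a\sqsubseteq b\rightarrow c$ down to $a\odot b\sqsubseteq c$; here the only use of Lemma~\ref{lem2} is $(b'\vee\Max L(b,c))\wedge b=\Max L(b,c)$, which is Lemma~\ref{lem2}(ii) and hence consumes only (B). Divisibility in part (b) I would establish by the same union computation as in Theorem~\ref{th1}, whose crucial identity $(x'\vee u)\wedge x=u$ for $u\leq x$ is exactly condition (B); this is also why divisibility is not asserted in part (a), where (B) is unavailable.

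The main obstacle is conceptual rather than computational: one must pin down precisely where each half of the adjointness equivalence draws on (A) versus (B), and resist claiming the full biconditional, or divisibility, in the weakly orthomodular case. The most delicate bookkeeping is to verify that every join and meet appearing in each half of the chain is actually defined under the one-sided hypothesis in force, so that the monotonicity steps for $\sqsubseteq$ are legitimate and neither implication secretly requires the absent condition. In part (a) the meets $\Min U(a,b')\wedge b$ come from Lemma~\ref{lem1}(i) and the join $b'\vee(\Min U(a,b')\wedge b)$ from Lemma~\ref{lem2}(i), while $b\rightarrow c$ is defined by the standing assumption on $\rightarrow$; in part (b) the join $b\rightarrow c=b'\vee\Max L(b,c)$ comes from Lemma~\ref{lem1}(ii) and the meet $(b'\vee\Max L(b,c))\wedge b$ from Lemma~\ref{lem2}(ii), while $a\odot b$ is defined by the standing assumption on $\odot$.
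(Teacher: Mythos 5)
Your proposal is correct and takes essentially the same route as the paper: the paper's proof consists precisely of the preliminary identities ($0'\approx1$, $1'\approx0$, $x\rightarrow0\approx x'$, $x\vee(x\rightarrow0)\approx1$), well-definedness via Lemma~\ref{lem1}(i) respectively (ii), and the declaration that ``the rest is completely analogous to Theorem~\ref{th1}'' --- which is exactly the bookkeeping you carry out, splitting the adjointness chain so that the forward half consumes Lemma~\ref{lem2}(i)/(A) and the backward half Lemma~\ref{lem2}(ii)/(B), with divisibility tied to (B) and hence available only in part (b). The only (inessential) divergence is that the paper derives $x\vee(x\rightarrow0)\approx1$ via (A) as $x\vee(1\wedge x')\approx1$, whereas you obtain it directly from the complementation.
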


\begin{proof}
The proof is similar to that of Theorem~\ref{th1}. But we cannot use the fact that $'$ is antitone. Let $a,b,c\in P$. We have $0'\approx0\vee0'\approx1$ and $1'\approx1\wedge1'\approx0$.
\begin{enumerate}[(a)]
\item Because of Lemma~\ref{lem1}, $a\odot b$ is defined. By assumption also $a\rightarrow b$ is defined. We have $x\rightarrow0\approx x'\vee\Max L(x,0)=x'$ and since $x\leq1$ we have
\[
x\vee(x\rightarrow0)\approx x\vee x'\approx x\vee(1\wedge x')\approx1.
\]
The rest of the proof is completely analogous to that of Theorem~\ref{th1}.
\item Because of Lemma~\ref{lem1}, $a\rightarrow b$ is defined. By assumption also $a\odot b$ is defined. We have $x\rightarrow0\approx x'\vee\Max L(x,0)=x'$. The rest of the proof is completely analogous to that of Theorem~\ref{th1}.
\end{enumerate}
\end{proof}

Combining both assumptions (a) and (b) of Theorem~\ref{th4} we can prove a result similar to that of Theorem~\ref{th1}.

\begin{corollary}\label{cor2}
Let $\mathbf P=(P,\leq,{}',0,1)$ be a poset of finite height being both weakly orthomodular and dually weakly orthomodular and satisfying the identity $x''\approx x$ and define mappings $\odot,\rightarrow\colon P^2\rightarrow2^P\setminus\{\emptyset\}$ by
\begin{align*}
      x\odot y & :=\Min U(x,y')\wedge y, \\
x\rightarrow y & :=x'\vee\Max L(x,y)
\end{align*}
for all $x,y\in P$. Then $\odot$ and $\rightarrow$ are well-defined and $\mathbb R(\mathbf P):=(P,\leq,\odot,\rightarrow,0,1)$ is an idempotent and divisible operator residuated structure satisfying the double negation law and $x\rightarrow0\approx x'$.
\end{corollary}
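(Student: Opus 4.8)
The plan is to deduce the statement directly from the two halves of Theorem~\ref{th4}, whose hypotheses are precisely the two conditions we now impose simultaneously. First I would observe that a poset which is at the same time weakly orthomodular and dually weakly orthomodular satisfies both condition (A) and condition (B) from the discussion following Definition~\ref{def3}, and that the identity $x''\approx x$ says exactly that $'$ is an involution. Consequently Lemma~\ref{lem1}(i) (which uses (A) together with the involution) guarantees that $\Min U(x,y')\wedge y$ is defined, and Lemma~\ref{lem1}(ii) (which uses (B)) guarantees that $x'\vee\Max L(x,y)$ is defined. Thus both $\odot$ and $\rightarrow$ are well-defined on all of $P^2$, which is exactly what is needed to activate both parts of Theorem~\ref{th4}: part (a) presupposes that $\rightarrow$ be well-defined and part (b) presupposes that $\odot$ be well-defined, and here both presuppositions are met.

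Next I would invoke the two parts of Theorem~\ref{th4} for the same pair of operators $\odot,\rightarrow$. Applying part (a), the weakly orthomodular case, yields conditions (i), (iv), (v) and (vi) of Definition~\ref{def1}, idempotency, the double negation law, and the identity $x\rightarrow0\approx x'$, together with the forward direction of adjointness, namely (ii$'$): $x\odot y\sqsubseteq z$ implies $x\sqsubseteq y\rightarrow z$. Applying part (b), the dually weakly orthomodular case, yields the same structural conditions together with divisibility --- the one ingredient of the corollary not already furnished by part (a) --- and the reverse direction of adjointness, namely (ii$''$): $x\sqsubseteq y\rightarrow z$ implies $x\odot y\sqsubseteq z$.

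Finally I would glue the two one-sided implications together: (ii$'$) and (ii$''$) combine into the biconditional $x\odot y\sqsubseteq z\Leftrightarrow x\sqsubseteq y\rightarrow z$, which is precisely the operator left adjointness (iii) of Definition~\ref{def1}. At this point all six conditions of Definition~\ref{def1} are in force, so $\mathbb R(\mathbf P)$ is an operator residuated structure, and it is moreover idempotent and divisible and satisfies the double negation law and $x\rightarrow0\approx x'$, as claimed. I do not expect a genuine obstacle here, since the substantive work has already been carried out in Theorem~\ref{th4}. The only point requiring care is that the two halves must be applied to the very same operators --- which is legitimate because the defining formulas for $\odot$ and $\rightarrow$ are identical in both cases --- so that the two one-directional adjointness conditions may be merged into the full equivalence.
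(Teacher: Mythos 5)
Your proposal is correct and follows essentially the same route as the paper, whose proof of Corollary~\ref{cor2} is simply the one-line remark that it follows from Theorem~\ref{th4} by combining assumptions (a) and (b). You merely make explicit what the paper leaves implicit: that weak orthomodularity plus the involution law yields well-definedness of $\odot$ via Lemma~\ref{lem1}(i), dual weak orthomodularity yields well-definedness of $\rightarrow$ via Lemma~\ref{lem1}(ii) (so each part's presupposition is met by the other part's conclusion), and the one-sided conditions (ii$'$) and (ii$''$) merge into the full operator left adjointness (iii), with divisibility supplied by part (b).
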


\begin{proof}
This follows from Theorem~\ref{th4}.
\end{proof}

The following lattice is an example of a poset satisfying the conditions of Corollary~\ref{cor2}, but being not orthomodular.

\begin{example}
The lattice $\mathbf L$ depicted in Figure~5
\vspace*{-2mm}
\begin{center}
\setlength{\unitlength}{7mm}
\begin{picture}(12,8)
\put(3,1){\circle*{.3}}
\put(1,3){\circle*{.3}}
\put(3,3){\circle*{.3}}
\put(5,3){\circle*{.3}}
\put(9,3){\circle*{.3}}
\put(3,5){\circle*{.3}}
\put(7,5){\circle*{.3}}
\put(9,5){\circle*{.3}}
\put(11,5){\circle*{.3}}
\put(9,7){\circle*{.3}}
\put(3,1){\line(-1,1)2}
\put(3,1){\line(0,1)4}
\put(3,1){\line(1,1)2}
\put(3,1){\line(3,1)6}
\put(1,3){\line(1,1)2}
\put(1,3){\line(3,1)6}
\put(3,3){\line(3,1)6}
\put(5,3){\line(-1,1)2}
\put(5,3){\line(3,1)6}
\put(9,3){\line(-1,1)2}
\put(9,3){\line(0,1)4}
\put(9,3){\line(1,1)2}
\put(3,5){\line(3,1)6}
\put(7,5){\line(1,1)2}
\put(11,5){\line(-1,1)2}
\put(2.85,.3){$0$}
\put(.4,2.85){$a$}
\put(2.4,2.85){$b$}
\put(4.3,2.85){$c$}
\put(8.85,2.3){$d$}
\put(2.85,5.35){$e$}
\put(7.35,4.85){$f$}
\put(9.25,4.85){$g$}
\put(11.3,4.85){$h$}
\put(8.85,7.35){$1$}
\put(-5.4,-.75){{\rm Fig.~5}. Weakly orthomodular and dually weakly orthomodular lattice whose complementation}
\put(-3.4,-1.45){is a non-antitone involution}
\end{picture}
\end{center}
\vspace*{10mm}
with complementation defined by
\[
\begin{array}{c|cccccccccc}
x  & 0 & a & b & c & d & e & f & g & h & 1 \\
\hline
x' & 1 & g & h & f & e & d & c & a & b & 0
\end{array}
\]
is modular and hence both weakly orthomodular and dually weakly orthomodular and its complementation is an involution but not antitone since $a\leq f$, but $f'=c\not\leq g=a'$. Hence the horizontal sum of $\mathbf L$ with the orthomodular poset $\mathbf P$ from Figure~1 is a weakly orthomodular and dually weakly orthomodular poset that is neither a lattice nor an orthomodular poset.
\end{example}

Similarly, as it was done above in the case of orthomodular posets, we can prove the converse of Corollary~\ref{cor2}.

\begin{theorem}
Let $\mathbf R=(R,\leq,\odot,\rightarrow,0,1)$ be a divisible operator residuated structure of finite height satisfying the double negation law and define
\[
x':=x\rightarrow0
\]
for all $x\in R$. Then $\mathbb P(\mathbf R):=(R,\leq,{}',0,1)$ is a dually weakly orthomodular poset of finite height satisfying the identity $x''\approx x$.
\end{theorem}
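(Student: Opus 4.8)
The plan is to reproduce the proof of the immediately preceding converse theorem (the one producing an orthomodular poset from a structure satisfying the contraposition law) almost verbatim; the single point where that proof used antitonicity of $'$ is the one place where I shall have to do something new. First I would note that the double negation law by itself already forces $'$ to be well behaved. Writing $x':=x\rightarrow0$, the law $(x\rightarrow0)\rightarrow0\approx x$ reads $\bigcup_{a\in x'}a'=\{x\}$, so each $a'$ with $a\in x'$ is a nonempty subset of $\{x\}$, i.e.\ $a'=\{x\}$; applying the law once more to such an $a$ gives $x'=a''=\{a\}$, so $x'$ is a singleton for every $x$. Hence $'$ is a genuine unary operation on $R$ and $x''\approx x$ holds, which already secures the required identity.

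Next I would derive condition (B), i.e.\ dual weak orthomodularity, exactly as in the orthomodular case, since that computation never appealed to antitonicity. Let $a\le b$. By (vi) of Definition~\ref{def1} we have $b\rightarrow a=(b\rightarrow0)\vee a=b'\vee a\ge b'=b\rightarrow0$, so (v) yields $(b\rightarrow a)\odot b=(b\rightarrow a)\wedge b$; together with divisibility and $\Max L(b,a)=\{a\}$ (which holds because $a\le b$) this gives
\[
b\wedge(a\vee b')=(b'\vee a)\wedge b=(b\rightarrow a)\odot b=\Max L(b,a)=a,
\]
which is (B). Specialising (B) to the instance $0\le x$ and using $0\vee x'=x'$ produces $x\wedge x'=0$, one half of complementation.

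The remaining half, $x\vee x'=1$, is the place where the earlier proof invoked De Morgan's laws and hence antitonicity; this is the step I expect to be the main obstacle, and I would obtain it instead from adjointness. By (iv) we have $1\odot x=x$, hence $1\odot x\sqsubseteq x$, and operator left adjointness (iii) converts this into $1\sqsubseteq x\rightarrow x$; as $1$ is the greatest element this means $1\in x\rightarrow x$. On the other hand (vi) gives $x\rightarrow x=(x\rightarrow0)\vee x=x'\vee x$, and since $x'\vee x$ is the join of the two elements $x'$ and $x$, membership of $1$ forces $x\vee x'=1$. Combining $x\wedge x'=0$ and $x\vee x'=1$ shows that $'$ is a complementation, and together with condition (B), the involution law $x''\approx x$, and the finite height inherited from $\mathbf R$, this yields that $\mathbb P(\mathbf R)$ is a dually weakly orthomodular poset of finite height satisfying $x''\approx x$.
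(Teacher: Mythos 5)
Your proof is correct, and at one crucial point it is actually more self-contained than the paper's own argument. For dual weak orthomodularity and $x\wedge x'\approx0$ you follow the paper essentially verbatim: condition (vi) gives $b\rightarrow a=b'\vee a\geq b'$, condition (v) then turns $(b\rightarrow a)\odot b$ into $(b\rightarrow a)\wedge b$, and divisibility together with $\Max L(b,a)=\{a\}$ yields $b\wedge(a\vee b')=a$, whose instance $0\leq a$ gives $a\wedge a'=0$. Where you diverge is the law $x\vee x'\approx1$: the paper disposes of it by appealing to ``(vii) of Definition~\ref{def1}'', but Definition~\ref{def1} as printed contains only items (i)--(vi); the invoked condition $x\vee(x\rightarrow0)\approx1$ occurs in the paper only as a \emph{derived} property in Theorem~\ref{th4}(a), so the published proof has a dangling reference, presumably a relic of an earlier version of the definition. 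Your derivation closes this gap from the stated axioms alone: $1\odot x\approx x$ gives $1\odot x\sqsubseteq x$, left adjointness (iii) converts this to $1\sqsubseteq x\rightarrow x$, hence $1\in x\rightarrow x$ because $1$ is the top element, and (vi) identifies $x\rightarrow x$ with the singleton $x'\vee x$, forcing $x\vee x'=1$. A further small improvement on the paper: you verify explicitly that the double negation law forces each $x\rightarrow0$ to be a singleton (each $a\in x'$ has $a'\subseteq\{x\}$ nonempty, whence $a'=\{x\}$ and $x'=a''=\{a\}$), so that $'$ really is an involutive unary operation on $R$; the paper merely asserts that $'$ is an involution. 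In short: same skeleton as the paper, but your handling of the complementation law $x\vee x'\approx1$ is a genuine and welcome repair of an erratum in the published proof.
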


\begin{proof}
Let $a,b\in R$. Since $\mathbf R$ satisfies the double negation law, $'$ is an involution on $(R,\leq)$. Now assume $a\leq b$. Because of (vi) of Definition~\ref{def1} we have $b\rightarrow a=(b\rightarrow0)\vee a=b'\vee a\geq b'$. According to (v) of Definition~\ref{def1} we conclude $(b\rightarrow a)\odot b=(b\rightarrow a)\wedge b$ which together with divisibility yields
\[
b\wedge(a\vee b')=(b'\vee a)\wedge b=(b\rightarrow a)\wedge b=(b\rightarrow a)\odot b=\Max L(b,a)=a,
\]
i.e.\ $\mathbb P(\mathbf R)$ is dually weakly orthomodular. Because of $0\leq a$ we obtain $0=a\wedge(0\vee a')=a\wedge a'$. According to (vii) of Definition~\ref{def1} we have $a\vee a'=a\vee(a\rightarrow0)=1$ completing the proof of the theorem.
\end{proof}

The following result shows that the correspondence between posets $(P,\leq,{}',0,1)$ of finite height being both weakly orthomodular and dually weakly orthomodular and satisfying the identity $x''\approx x$ on the one side and operator residuated structures of finite height on the other side is almost one-to-one.

\begin{corollary}\label{cor1}
Let $\mathbf P=(P,\leq,{}',0,1)$ be a poset of finite height being both weakly orthomodular and dually weakly orthomodular and satisfying the identity $x''\approx x$. Then $\mathbb P\big(\mathbb R(\mathbf P)\big)=\mathbf P$.
\end{corollary}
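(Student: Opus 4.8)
The plan is to recognize that this corollary is the exact analogue, in the weakly/dually weakly orthomodular setting, of Theorem~\ref{th2}(i), and that its proof collapses onto a single identity already supplied by Corollary~\ref{cor2}. First I would check that the composite $\mathbb P\big(\mathbb R(\mathbf P)\big)$ is even well-formed. By Corollary~\ref{cor2} the operators $\odot,\rightarrow$ are well-defined and $\mathbb R(\mathbf P)=(P,\leq,\odot,\rightarrow,0,1)$ is an idempotent and divisible operator residuated structure satisfying the double negation law; since $\mathbf P$ is of finite height so is $\mathbb R(\mathbf P)$. Thus $\mathbb R(\mathbf P)$ meets precisely the hypotheses required for the operation $\mathbb P$ to be applied, so $\mathbb P\big(\mathbb R(\mathbf P)\big)$ makes sense.

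Next I would compare the two structures componentwise. Writing $\mathbb P\big(\mathbb R(\mathbf P)\big)=(P,\leq,{}^*,0,1)$, the passage from $\mathbf P$ to $\mathbb R(\mathbf P)$ and then to $\mathbb P\big(\mathbb R(\mathbf P)\big)$ leaves the carrier set $P$, the partial order $\leq$, and the bounds $0$ and $1$ entirely untouched. Hence the whole question reduces to showing that the reconstructed complementation ${}^*$ agrees with the original $'$. By the definition of $\mathbb P$, the complement is recovered as $x^*:=x\rightarrow0$, where $\rightarrow$ is the implication of $\mathbb R(\mathbf P)$.

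Finally I would invoke the identity $x\rightarrow0\approx x'$ recorded in Corollary~\ref{cor2} (which itself rests on the one-line computation $x\rightarrow0=x'\vee\Max L(x,0)=x'\vee0=x'$, using that $L(x,0)=\{0\}$ and that joining with the bottom changes nothing). This gives $x^*\approx x\rightarrow0\approx x'$ for every $x\in P$, so the two complementations coincide pointwise, and therefore $\mathbb P\big(\mathbb R(\mathbf P)\big)=\mathbf P$. I do not anticipate any genuine obstacle: all the work has already been absorbed into the identity $x\rightarrow0\approx x'$, and the only thing that must be verified is the routine but essential point that $\mathbb P$ reconstructs the complement exactly as $x\rightarrow0$ rather than through some derived expression. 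Indeed the argument is verbatim the same as for Theorem~\ref{th2}(i), with Corollary~\ref{cor2} playing the role that Theorem~\ref{th1} played there.
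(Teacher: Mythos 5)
Your proposal is correct and takes essentially the same approach as the paper, which likewise reduces the whole statement to the identity $x^*\approx x\rightarrow0\approx x'$ supplied by Corollary~\ref{cor2} and dismisses the coincidence of carrier, order and bounds as evident. Your additional check that $\mathbb R(\mathbf P)$ satisfies the hypotheses under which $\mathbb P$ is applied is a harmless elaboration of what the paper leaves implicit.
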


\begin{proof}
If $\mathbb R(\mathbf P)=(P,\leq,\odot,\rightarrow,0,1)$ and $\mathbb P\big(\mathbb R(\mathbf P)\big)=(P,\leq,{}^*,0,1)$ then according to Corollary~\ref{cor2} we have $x^*\approx x\rightarrow0\approx x'$. The rest of the proof is evident.
\end{proof}

According to Corollary~\ref{cor1}, $\mathbb R(\mathbf P)$ contains the whole information on $\mathbf P$.

Authors' addresses:

Ivan Chajda \\
Palack\'y University Olomouc \\
Faculty of Science \\
Department of Algebra and Geometry \\
17.\ listopadu 12 \\
771 46 Olomouc \\
Czech Republic \\
ivan.chajda@upol.cz

Helmut L\"anger \\
TU Wien \\
Faculty of Mathematics and Geoinformation \\
Institute of Discrete Mathematics and Geometry \\
Wiedner Hauptstra\ss e 8-10 \\
1040 Vienna \\
Austria, and \\
Palack\'y University Olomouc \\
Faculty of Science \\
Department of Algebra and Geometry \\
17.\ listopadu 12 \\
771 46 Olomouc \\
Czech Republic \\
helmut.laenger@tuwien.ac.at
\end{document}